\documentclass[12pt]{amsart}
\usepackage{amsmath,amssymb,amscd,graphicx,verbatim,rotating}
\usepackage[colorlinks=true, pdfstartview=FitV, linkcolor=blue, citecolor=blue, urlcolor=blue]{hyperref}
\pdfoutput=1
\usepackage{mathpazo}
\theoremstyle{plain}
\newtheorem*{theorem}{Theorem}

\newcommand{\tre}{\text{Re}}

\newcommand{\lodd}{\lambda_{\text{odd}}}
\newcommand{\todd}{\tau_{\text{odd}}}

\theoremstyle{definition}
\newtheorem{lemma}{Lemma}
\newtheorem*{remark}{Remark}

\begin{document}
\title[Elementary Deuring-Heilbronn]{Elementary Deuring-Heilbronn Phenomenon}
\author{Jeffrey Stopple}
\begin{abstract}
Adapting a technique of Pintz, we give an elementary demonstration of the Deuring phenomenon: a zero of $\zeta(s)$ off the critical line gives a lower bound on $L(1,\chi)$.  The necessary tools are Dirichlet's \lq method of the hyperbola\rq, Euler summation, summation by parts, and the Polya-Vinogradov inequality.
\end{abstract}
\email{stopple@math.ucsb.edu}\address{Mathematics Department, UC Santa Barbara, Santa Barbara CA 93106}
\keywords{Landau-Siegel zero, Deuring-Heilbronn phenomenon}
\subjclass[2000]{11M20, 11M26}

\maketitle

\subsection*{Introduction}

In a long series of papers in \emph{Acta Arithmetica}, J\'anos Pintz gave remarkable elementary proofs of theorems concerning $L(s,\chi)$, $\chi$ the Kronecker symbol attached to a fundamental discriminant $-D$.  These include theorems of Hecke, Landau, Siegel, Page, Deuring, and Heilbronn \cite{Pintz, PintzI, PintzII, PintzIII, PintzIV, PintzV}.  In \cite{PintzIII}, for example, he gives his version of the Deuring phenomenon \cite{Deuring}: Under the very strong assumption that the class number $h(-D)\le \log^{3/4}(D)$, he obtains a zero free region for $\zeta(s)L(s,\chi)$.  As the reviewer in \emph{Math.\ Reviews} noted, by Siegel's Theorem this can hold for only finitely many $D$ (with an ineffective constant.)  Subsequently the Goldfeld-Gross-Zagier Theorem shows this can happen for only finitely many $D$ with an effective constant\footnote{In fact there are 61 such fundamental discriminants, all with $-1555\le -D$.}.  This is unfortunate, as the proof Pintz gave actually depends on the fact that the exponent of the class group $\mathcal C(-D)$ (v.\ the order) is small.  

In \cite{PintzIV} he gives an elementary version  of (the contrapositive of) the Heilbronn phenomenon \cite{Heilbronn}:\ a zero off the critical line of an $L$-function $L(s,\chi_k)$ attached to any primitive real character can be used to give lower bounds on $L(1,\chi)$.  The same \emph{Math.\ Reviews} reviewer called the proof \lq\lq ingenious and quite brief.\rq\rq\footnote{See also \cite{Motohashi}, \cite[\S 4.2]{Motobook} for an elementary proof by Motohashi which is based on the Selberg sieve.}

Pintz's idea is very roughly as follows:  With $\lambda$ denoting the Liouville function, the convolution $1*\lambda$ is the characteristic function of squares.  Thus for $\rho$ a hypothetical zero of $L(s,\chi_k)$ with $\tre(\rho)>1/2$, one can consider finite sums of the form
\[
\sum_{n<X}\frac{\chi_k(n)}{n^\rho}1*\lambda(n).
\]
Since $\chi_k(m^2)=1$ or $0$, one can compare this sum to a partial sum of $\zeta(2\rho)$, and obtain a lower bound.
Pintz decomposes the sum into two pieces, carefully chosen so that $L(\rho,\chi_k)=0$ shows one piece is not too big, and therefore the other piece is not too small.  But if $L(1,\chi)$ were small due to the existence of a Landau-Siegel zero, $\chi$ would be a good approximation to $\lambda$, and (he can show) this second term would necessarily be small.

In this paper we adapt the method of \cite{PintzIV} to apply to $\zeta(s)$, and thus give an elementary demonstration of the Deuring phenomenon.  Because $\zeta(s)$ does not converge even conditionally in the critical strip, we assume first that $D$ is even, and consider instead
\[
\phi(s)=\left(2^{1-s}-1\right)\zeta(s)=\sum_n\frac{(-1)^n}{n^s}.
\]  
Suppose $\rho=\beta+i\gamma$ is a zero of $\zeta(s)$  off the critical line.  Let $\delta/2\pi$ be the fractional part of $\log 2\cdot  \gamma/2\pi$ so that for integer $n$,
\begin{gather*}
\log 2\cdot \gamma =2\pi n+\delta,\\
-\pi<\delta\le \pi,\\
2^{-i\gamma}=\exp(-i\delta).
\end{gather*}

\begin{theorem}  If $\beta>7/8$ and $|\delta|>\pi/100$, then for any real primitive character $\chi$ modulo $D\equiv 0\bmod 4$, $D>10^9$, we have the lower bound
\[
L(1,\chi)>\frac{1}{5400\cdot U^{12(1-\beta)}\log^3U},
\]
where $U=|\rho|D^{1/4}\log D$.  
\end{theorem}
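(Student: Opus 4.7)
The plan is to adapt the method of~\cite{PintzIV}, with a zero of $\zeta(s)$ playing the role of the hypothetical zero of $L(s,\chi_k)$ there. The starting point is the finite sum
\[
B(X):=\sum_{n\le X}\chi(n)(-1)^n(1*\lambda)(n)\,n^{-\rho}
\]
for a parameter $X$ to be chosen as a suitable power of $U$. Since $4\mid D$ forces $\chi$ to be supported on odd $n$ (so $\chi(n)(-1)^n=-\chi(n)$) and $1*\lambda$ is the indicator of squares, this collapses to
\[
B(X)=-\sum_{\substack{k\le\sqrt{X}\\(k,D)=1}}k^{-2\rho},
\]
a partial sum of $-L(2\rho,\chi^2)$. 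For $\beta>7/8$ we have $2\beta>7/4$, so the series converges absolutely and a tail estimate (with the $k=1$ term dominating) yields a definite lower bound $|B(X)|\ge c_0>0$, independent of $X$ once $X$ is reasonably large.

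On the other hand, writing $(1*\lambda)(n)=\sum_{ab=n}\lambda(a)$ gives
\[
B(X)=-\sum_{ab\le X}\chi(a)\lambda(a)\chi(b)(ab)^{-\rho},
\]
and Dirichlet's hyperbola method with splitting parameter $Y$ expresses $-B(X)$ as a sum of three products involving the inner sums
\[
S(N):=\sum_{b\le N}\chi(b)/b^\rho,\qquad T(N):=\sum_{a\le N}\chi(a)\lambda(a)/a^\rho.
\]
The heart of the argument is to bound these inner sums with explicit dependence on $L(1,\chi)$. For $S(N)$, partial summation together with the Polya--Vinogradov inequality gives $S(N)=L(\rho,\chi)+O(|\rho|D^{1/2}\log D\cdot N^{-\beta})$, and a second application of partial summation expresses $L(\rho,\chi)$ in terms of the partial sums $\sum_{b\le M}\chi(b)/b$, whose limit is $L(1,\chi)$. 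Crucially, the hypothesis $\zeta(\rho)=0$ enters via Euler summation: the expansion $\sum_{a\le N}a^{-\rho}=N^{1-\rho}/(1-\rho)+\zeta(\rho)+O(|\rho|N^{-\beta})$ loses its would-be constant term, which is what enables a clean bound for $T(N)$ after writing $\chi(a)\lambda(a)=1+(\chi(a)\lambda(a)-1)$; the first piece is handled by the Euler-summation formula above and the correction is controlled by $L(1,\chi)$-dependent bounds built from the analysis of $S$.

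Combining all of these estimates and optimizing the parameters $X$ and $Y$ as powers of $U=|\rho|D^{1/4}\log D$, the upper bound on $|B(X)|$ takes the shape of a constant multiple of $|L(1,\chi)|\cdot U^{c(1-\beta)}(\log U)^k$ for some explicit $c$ and $k$; matching this against the lower bound $|B(X)|\ge c_0$ forces the stated inequality
\[
L(1,\chi)>\frac{1}{5400\cdot U^{12(1-\beta)}\log^3 U}.
\]
The hypothesis $|\delta|>\pi/100$ enters as a lower bound on quantities such as $|2^{1-\rho}-1|$ arising when converting between the $\phi$- and $\zeta$-based computations. The principal technical obstacle I anticipate is the treatment of $T(N)$: since $\chi\lambda$ is not itself a Dirichlet character one cannot apply Polya--Vinogradov to it directly, so the bound must be assembled from that for $S$ using the identity $\sum_n\chi(n)\lambda(n)/n^s=L(2s,\chi^2)/L(s,\chi)$, with sufficient care to avoid circularity and to keep the constants explicit enough to yield the numerical bound $5400$ with the exponents $12(1-\beta)$ and $3$.
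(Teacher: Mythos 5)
There is a structural gap here that I do not think can be repaired within your framework. By absorbing $(-1)^n$ into $\chi(n)$ at the outset, your test sum $B(X)$ becomes a partial sum of $-L(2\rho,\chi^2)$, an object built entirely out of $\chi$; the hypothesis $\zeta(\rho)=0$ then has no natural way to enter. Your hyperbola decomposition merely reconstitutes the factorization $L(2s,\chi^2)=L(s,\chi)\cdot\bigl(L(2s,\chi^2)/L(s,\chi)\bigr)$ at $s=\rho$: the inner sums $S(N)$ and $T(N)$ tend to $L(\rho,\chi)$ and $L(2\rho,\chi^2)/L(\rho,\chi)$ respectively, neither of which is forced to be small by the hypotheses, so no contradiction with the lower bound $|B(X)|\ge c_0$ can be extracted. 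In particular, the claim that $L(\rho,\chi)$ can be related by partial summation to $\sum_{b\le M}\chi(b)/b$ and hence to $L(1,\chi)$ is false: here $\rho=\beta+i\gamma$ with $|\gamma|>10^{12}$, and the value $L(\beta+i\gamma,\chi)$ at that height bears no useful relation to $L(1,\chi)$. Your fallback for $T(N)$ --- writing $\chi\lambda=1+(\chi\lambda-1)$ and invoking $\zeta(\rho)=0$ through Euler summation --- also fails quantitatively: the surviving main term $N^{1-\rho}/(1-\rho)$ has modulus of order $N^{1-\beta}/|\rho|$, which for $N$ near $X=U^{12}$ and $\beta$ as small as $7/8$ can reach $U^{3/2}/|\rho|\gg 1$; and the bound on $\sum_{a\le N}(\chi(a)\lambda(a)-1)a^{-\rho}$ in terms of $L(1,\chi)$ is exactly the hard step, which you assert rather than prove.

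The paper avoids all of this by keeping $\chi$ out of the test sum: it works with $\sum_{n\le U^{12}}(-1)^n(1*\lodd)(n)n^{-\rho}$ (no $\chi$), whose lower bound comes from $|\zeta(2\rho)|$ --- this is where $|\delta|>\pi/100$ is needed, to keep $\bigl|(1+2^{-\rho})(2^{1-\rho}-1)\bigr|$ away from $0$. The character is introduced only through the convolution identity $1*\lodd=g*(1*\chi)$ for an explicit $g$ supported on the set $C$ of integers having no prime factor $p$ with $\chi(p)=-1$. The zero of $\zeta$ then enters through the genuinely alternating inner sum $\sum_{l\le N}(-1)^l l^{-\rho}$, whose partial sums are $O(|\rho|N^{-\beta})$ with no secular $N^{1-\rho}/(1-\rho)$ term, because $\sum_{n\le x}(-1)^n$ is bounded and the series converges to $(2^{1-\rho}-1)\zeta(\rho)=0$; the companion sum $\sum_d\chi(d)d^{-\rho}$ is controlled absolutely by Polya--Vinogradov, with no reference to $L(\rho,\chi)$. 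Finally, $L(1,\chi)$ enters through the nonnegativity of $1*\chi$ and the elementary asymptotic $\sum_{n\le X}(1*\chi)(n)/n\sim L(1,\chi)\log X$. You would need analogues of both mechanisms --- a factor annihilated by the zero with no main term left over, and a nonnegative factor whose logarithmic average is $L(1,\chi)$ --- and your decomposition into $S$ and $T$ provides neither.
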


The proof actually gives some kind of nontrivial bound as long as $\beta>5/6$.  We assume $\beta>7/8$ simply to get a precise constant in the theorem.  
%The restriction on $U$ is nominal, as the Riemann Hypothesis has been verified for $10^{13}$ zeros, or height $2.4\cdot 10^{12}$.

In the last section we discuss general $D$, adapting the proof with Ramanujan sums $c_q(n)$ for a fixed prime $q|D$.

\subsection*{Arithmetic Function Preliminaries}

Generalizing Liouville's $\lambda$ function, we begin by defining $\lodd(n)$ via
\[
\lodd(n)
=
\begin{cases}
0&\text{ if } n \text{ is even}\\
\lambda(n)&\text{ if } n \text{ is odd.}
\end{cases}
\]
So
\[
\sum_{n=1}^\infty\frac{\lodd(n)}{n^s}=\frac{\zeta(2s)}{\zeta(s)}\cdot (1+2^{-s}),
\]
and the convolution $1*\lodd(n)$ satisfies
\[
1*\lodd(n)
=
\begin{cases}
1&\text{ if }n=m^2\text{ or }n=2m^2\\
0&\text{ otherwise}.
\end{cases}
\]

With $\tau(n)$ the divisor function and $\nu(n)$ the number of distinct primes dividing $n$, we have that
\[
1*\lambda(n)=\sum_{d|n}2^{\nu(d)}\lambda(d)\tau(n/d).
\]
(One needs to verify this only for $n=p^k$ as both sides are multiplicative.)\ \   We generalize this by defining $\todd(n)$ to be the number of odd divisors of $n$, so that
\[
1*\lodd(n)=\sum_{d|n}2^{\nu(d)}\lodd(d)\todd(n/d).
\]
(For $n$ odd this follows from $\lodd(d)=\lambda(d)$ and $\todd(n/d)=\tau(n/d)$, while for $n=2^k$ both sides are equal $1$.)\ \ 

Following Pintz we define, relative to the quadratic character $\chi$ modulo $D$, sets
\begin{gather*}
A_j=\{u \text{ such that } p|u\Rightarrow \chi(p)=j\}\quad\text{ for }\quad j=-1,0,1\\
C=\{c=ab\,|\, a\in A_1, b\in A_0\}.
\end{gather*}
We are assuming that $2\in A_0$, so integers in $A_{-1}$ and $A_1$ are odd.  We factor an arbitrary $n$ as
\[
n=abm=cm,\quad\text{ where }a\in A_1, b\in A_0, m\in A_{-1}, c\in C.
\]
We then see that for
\begin{alignat*}{2}
&a\in A_1,&\quad1*\chi(a)=&\tau(a)=\todd(a),\\
&b\in A_0,&\quad1*\chi(b)=&1,\\
&m\in A_{-1},&\quad1*\chi(m)=&1*\lambda(m)=1*\lodd(m).
\end{alignat*}
Using this and multiplicativity, for $n=abm=cm$ as above we see that
\begin{multline}\label{Eq1}
1*\lodd(n)=1*\lodd(a)\cdot 1*\lodd(b)\cdot 1*\lodd(m)=\\
\left(\sum_{a^\prime|a}2^{\nu(a^\prime)}\lodd(a^\prime)\cdot 1*\chi(a/a^\prime)\right)\left(\sum_{b^\prime|b}\lodd(b^\prime)\cdot 1*\chi(b/b^\prime)\right)\cdot 1*\chi(m)\\
=\sum_{\substack{c^\prime|c,\\c^\prime=a^\prime b^\prime}}2^{\nu(a^\prime)}\lodd(c^\prime)\cdot 1*\chi(n/c^\prime).
\end{multline}

\subsection*{Lower Bounds}

\begin{lemma}  \label{L:Lemma1}
\[
\frac{1}{25}\cdot \frac{\zeta(4\beta)}{\zeta(2\beta)}-U^{6-12\beta}
\le \left|\sum_{n\le U^{12}}\frac{(-1)^n\cdot 1*\lodd(n)}{n^\rho}\right|
\]
\end{lemma}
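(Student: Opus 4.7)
My strategy is to use the support of $1*\lodd$ to rewrite the sum as a short combination of partial Dirichlet series, complete these to their full values, and then bound the outcome via $\zeta(2\rho)$ and an elementary trigonometric factor.

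Since $1*\lodd(n)$ equals $1$ exactly when $n = m^2$ or $n = 2m^2$ (and vanishes otherwise), the identities $(-1)^{m^2} = (-1)^m$ and $(-1)^{2m^2} = 1$ reduce the sum in the lemma to
\[
\sum_{m \le U^6}\frac{(-1)^m}{m^{2\rho}} + 2^{-\rho}\sum_{m \le U^6/\sqrt{2}}\frac{1}{m^{2\rho}}.
\]
Since $\tre(2\rho) = 2\beta > 1$, each truncated Dirichlet series is within $O(N^{1-2\beta}/|2\rho - 1|)$ of its completion, say by Euler summation; using $|2\rho - 1| \ge 2|\gamma|$ and $U \ge |\rho|$, the three tails combine into a total error bounded by $U^{6-12\beta}$. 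The completed sums equal $\phi(2\rho) = (2^{1-2\rho} - 1)\zeta(2\rho)$ and $2^{-\rho}\zeta(2\rho)$, respectively, so the sum in the lemma differs from
\[
(2^{1-2\rho} + 2^{-\rho} - 1)\,\zeta(2\rho)
\]
by at most $U^{6-12\beta}$ in absolute value.

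For $|\zeta(2\rho)|$ the Euler product gives $|\zeta(s)|^{-1} = \prod_p|1 - p^{-s}| \le \prod_p(1 + p^{-\sigma}) = \zeta(\sigma)/\zeta(2\sigma)$ when $\sigma = \tre(s) > 1$, so $|\zeta(2\rho)| \ge \zeta(4\beta)/\zeta(2\beta)$.

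The main obstacle is the uniform lower bound $|2^{1-2\rho} + 2^{-\rho} - 1| \ge 1/25$ for $\beta \in (7/8, 1)$ and $|\delta| \in (\pi/100, \pi]$. Setting $z = 2^{-\rho} = 2^{-\beta}e^{-i\delta}$ yields the factorization $|(2z-1)(z+1)|$. A direct computation gives $|2z-1|^2 = t^2 - 2t\cos\delta + 1$ with $t = 2^{1-\beta} \in [1, 2^{1/8}]$; this is increasing in $t$ for $t > \cos\delta$, hence minimized at $t = 1$, yielding $|2z-1| \ge 2|\sin(\delta/2)| \ge 2\sin(\pi/200)$ by hypothesis. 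Combining this with a positive lower bound for $|z+1|$ and a short case analysis---the worst case being $\beta \to 1$, $|\delta| \to \pi/100$, where the product is approximately $0.047$---delivers the constant $1/25$. Multiplying the two lower bounds and subtracting the error bound $U^{6-12\beta}$ completes the proof.
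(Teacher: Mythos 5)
Your proposal is correct and follows essentially the same route as the paper: both reduce the sum via the support of $1*\lodd$ to the two series over $n=m^2$ and $n=2m^2$, identify the main term $\left(1+2^{-\rho}\right)\left(2^{1-\rho}-1\right)\zeta(2\rho)$, bound $|\zeta(2\rho)|$ from below by the Euler-product comparison $\zeta(4\beta)/\zeta(2\beta)$, and absorb the truncation error into $U^{6-12\beta}$. The only difference is cosmetic: where the paper certifies $\left|\left(1+2^{-\rho}\right)\left(2^{1-\rho}-1\right)\right|>1/25$ by a \emph{Mathematica} computation, you locate the extremal point $\beta\to 1$, $|\delta|\to\pi/100$ by hand (correctly, with minimum value $\approx 0.047$); note only that the two factors $|2z-1|$ and $|z+1|$ cannot simply be minimized separately, since that product falls below $1/25$, so the ``short case analysis'' you defer to is genuinely needed.
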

\begin{proof}
We have
\begin{multline*}
\left|\sum_{n\le U^{12}}\frac{(-1)^n\cdot 1*\lodd(n)}{n^\rho}\right|
\ge \\
\left|\sum_{n=1}^\infty\frac{(-1)^n\cdot 1*\lodd(n)}{n^\rho}\right|
-\left|\sum_{ U^{12}<n}\frac{(-1)^n\cdot 1*\lodd(n)}{n^\rho}\right|.
\end{multline*}
Now
\[
\sum_{n=1}^\infty\frac{(-1)^n\cdot 1*\lodd(n)}{n^\rho}=
\sum_{m=1}^\infty\frac{(-1)^{m^2}}{m^{2\rho}}+\sum_{m=1}^\infty\frac{(-1)^{2m^2}}{2^\rho m^{2\rho}}.
\]
Observe that $(-1)^{m^2}=(-1)^m$, and of course $(-1)^{2m^2}=1$.
This gives
\[
\left(2^{1-2\rho}-1\right)\zeta(2\rho)+2^{-\rho}\zeta(2\rho)\\
=\left(1+2^{-\rho}\right)\left(2^{1-\rho}-1\right)\zeta(2\rho).
\]
We compare Euler products to see
\[
\frac{1}{\left|\zeta(2\rho)\right|}<\frac{\zeta(2\beta)}{\zeta(4\beta)},\quad\text{or}\quad
|\zeta(2\rho)|>\frac{\zeta(4\beta)}{\zeta(2\beta)}.
\]
Finally a calculation in \emph{Mathematica} shows that
\[
\left|\left(1+2^{-\rho}\right)\left(2^{1-\rho}-1\right)\right|>\frac{1}{25}
\]
as long as $|\delta|>\pi/100$.  This gives the main term of the Lemma.

Meanwhile
\[
\left|\sum_{ U^{12}<n}\frac{(-1)^n\cdot 1*\lodd(n)}{n^\rho}\right|\le
\left|\sum_{ U^{6}<m}\frac{(-1)^{m}}{m^{2\rho}}\right|+\left|\frac{1}{2^\rho}\sum_{ U^{6}/\sqrt{2}<m}\frac{1}{m^{2\rho}}\right|.
\]
The first sum on the right is bounded by $U^{-12\beta}$, by Abel's inequality.  And the second sum, via Euler summation formula \cite[Theorem 3.2 (c)]{Apostol} is $O(U^{6-12\beta})$.  In fact, the proof given there shows the implied constant can be taken as $1/(\sqrt{2}(2\beta-1))<1$ for $\beta>7/8$.
\end{proof}

\subsection*{Upper Bounds}
We now follow Pintz in writing
\begin{multline*}
\left|\sum_{n\le U^{12}}\frac{(-1)^n}{n^\rho}\cdot 1*\lodd(n)\right|\\
=\left|\sum_{n\le U^{12}}\frac{(-1)^n}{n^\rho} \sum_{c\in C,c|n}2^{\nu(a)}\lodd(c)\cdot 1*\chi(n/c)\right|,
\end{multline*}
via (\ref{Eq1}).
We change variables $n=rc$, and use the fact that for odd $c$ we have
\[
(-1)^{rc}=(-1)^r, \text{ and }\lodd(c)=0\text{ unless }c\text{ is odd}.
\]
(The fact that $(-1)^n$ is not a multiplicative function is the reason we've introduced $\lodd(n)$.)\ \ This is equal to
\begin{multline*}
=\left|\sum_{c\le U^{12},c\in C}\frac{2^{\nu(a)}\lodd(c)}{c^\rho}\sum_{r\le U^{12}/c}\frac{(-1)^r}{r^\rho}\cdot 1*\chi(r)\right|\le\Sigma^\prime_1+\Sigma^\prime_2,
\end{multline*}
where
\begin{align*}
\Sigma^\prime_1=&\sum_{\substack{c\le U^{6}\\ c\in C}}\frac{2^{\nu(a)}}{c^\beta}\left|\sum_{r\le U^{12}/c}\frac{(-1)^r}{r^\rho}\cdot 1*\chi(r)\right|\\
\Sigma^\prime_2=&\sum_{\substack{U^{6}< c\le U^{12}\\ c\in C}}\frac{2^{\nu(a)}}{c^\beta}\sum_{r\le U^{12}/c}\frac{1*\chi(r)}{r^\beta}.
\end{align*}
Using the inequalities
\begin{gather*}
2^{\nu(a)}\le 1*\chi(c)\le\todd(c)\le \tau(c),\\
1*\chi(r)\le \tau(r),
\end{gather*}
and dropping the condition $c\in C$ in the outer sums,
we see that
\begin{align*}
\Sigma_1^\prime\le \Sigma_1=&\sum_{n\le U^{6}}\frac{\tau(n)}{n^\beta}\left|\sum_{r\le U^{12}/n}\frac{(-1)^r}{r^\rho}\cdot 1*\chi(r)\right|,\\
\Sigma^\prime_2\le \Sigma_2=&\sum_{U^{6}< n\le U^{12}}\frac{1*\chi(n)}{n^\beta}\sum_{r\le U^{12}/n}\frac{\tau(r)}{r^\beta}.
\end{align*}
\begin{remark}
The main idea of the proof is to use the fact that $\zeta(\rho)=0$ to show that $\Sigma_1$ can not be too big.  This then implies that $\Sigma_2$ can not be too small, from which we can lower bound $L(1,\chi)$.
\end{remark}
\begin{lemma} \label{L:Lemma2} We estimate the inner sum  in $\Sigma_1$ as
\[
\left|\sum_{r\le y}\frac{(-1)^r}{r^\rho}\sum_{d|r}\chi(d)\right|< 
\frac{2}{3}\cdot y^{1/2-\beta}|\rho|D^{1/4}\log D\log (y/\sqrt{D}).
\]
\end{lemma}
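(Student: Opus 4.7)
The plan is to apply Dirichlet's hyperbola method to the double sum after a change of variables, with two decisive inputs: the hypothesis $\zeta(\rho)=0$, which via $\phi(\rho) = (2^{1-\rho}-1)\zeta(\rho) = 0$ forces cancellation in the tail of $\sum_e (-1)^e/e^\rho$, and the Polya-Vinogradov inequality, which controls the character sum.

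First I would exploit the parity structure. Because $4 \mid D$, we have $\chi(d) = 0$ for every even $d$, so the inner divisor sum restricts to odd $d$. For such $d$, writing $r = de$ gives $(-1)^r = (-1)^e$, and the sum to be estimated becomes
\[
T := \sum_{\substack{de \le y\\ d \text{ odd}}} \frac{\chi(d)(-1)^e}{(de)^\rho}.
\]
I then split at $V = y^{1/2}D^{1/4}$ with companion $W = y/V = y^{1/2}D^{-1/4}$, partitioning into those terms with $d \le V$ (call this $S_1$) and those with $d > V$, which forces $e \le W$ (call this $S_2$).

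For $S_1$, the inner sum is $E(y/d) := \sum_{e \le y/d}(-1)^e/e^\rho$. Since $\phi(\rho) = 0$, $E(N) = -\sum_{e > N}(-1)^e/e^\rho$, and Abel summation using the bounded partial sums of $(-1)^e$ yields $|E(N)| \le (1 + |\rho|/\beta) N^{-\beta}$. The $d^{-\beta}$ and $(d/y)^\beta$ factors then telescope in $d$, giving $|S_1| \le (1 + |\rho|/\beta)V/y^\beta = (1 + |\rho|/\beta)D^{1/4} y^{1/2-\beta}$.

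For $S_2$, the inner sum $R(e) := \sum_{V < d \le y/e,\ d \text{ odd}}\chi(d)/d^\rho$ is controlled by Abel summation against the Polya-Vinogradov bound $|\sum_{d \le t}\chi(d)| \le \sqrt{D}\log D$, which gives $|R(e)| \le (2 + |\rho|/\beta)\sqrt{D}\log D / V^\beta$ uniformly in $e$. Combining with $\sum_{e \le W}e^{-\beta} \le 1 + W^{1-\beta}\log W$ (via $(W^{1-\beta}-1)/(1-\beta) \le W^{1-\beta}\log W$) and the algebraic identities $W^{1-\beta}/V^\beta = y^{1/2-\beta}/D^{1/4}$ and $\log W = \tfrac{1}{2}\log(y/\sqrt{D})$, the dominant part of $|S_2|$ becomes
\[
(2 + |\rho|/\beta) \cdot \sqrt{D}\log D \cdot \frac{y^{1/2-\beta}}{D^{1/4}} \cdot \frac{1}{2}\log\!\left(\frac{y}{\sqrt{D}}\right),
\]
which has exactly the shape claimed, with $\sqrt{D}/D^{1/4} = D^{1/4}$ producing the advertised power of $D$.

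The main obstacle I anticipate is careful constant tracking. The factor $W^{1-\beta}$ is not bounded as $y$ grows, but its combination with $V^{-\beta}$ produces exactly $y^{1/2-\beta}/D^{1/4}$, so the bookkeeping has to reveal this cancellation cleanly. Folding in $|S_1|$ and the subordinate term from $\sum e^{-\beta}\le 1 + W^{1-\beta}\log W$, and using the hypotheses $\beta > 7/8$ and $D > 10^9$ (together with $|\rho| \ge |\gamma| \ge 14$ for a non-trivial zero of $\zeta$) to absorb these lower-order pieces into the main term, should deliver the stated constant $2/3$.
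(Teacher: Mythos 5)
Your proposal is correct and follows essentially the same route as the paper's proof: the same parity reduction (using $\chi(d)=0$ for even $d$ so that $(-1)^{de}=(-1)^e$), the same hyperbola split at $z=y^{1/2}D^{1/4}$, the same exploitation of $\phi(\rho)=0$ via partial summation to bound the alternating inner sum by $O(|\rho|(y/d)^{-\beta}/\beta)$, and the same Polya--Vinogradov plus partial summation estimate for the character sum, with the lower-order and boundary terms absorbed into the constant $2/3$ exactly as the paper does.
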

\begin{proof}  We write $(-1)^r=(-1)^{ld}$.  Since we're assuming $D$ is even, $\chi(d)=0$ unless $d$ is odd and so $(-1)^{ld}=(-1)^l$.  This gives
\begin{gather*}
\left|\sum_{r\le y}\frac{(-1)^r}{r^\rho}\sum_{d|r}\chi(d)\right|=
\left|\sum_{d\le y}\frac{\chi(d)}{d^\rho}\sum_{l\le y/d}\frac{(-1)^l}{l^\rho}\right|\\
\le\left|\sum_{d\le z}\frac{\chi(d)}{d^\rho}\sum_{l\le y/d}\frac{(-1)^l}{l^\rho}\right|+
\left|\sum_{l\le y/z}\frac{(-1)^l}{l^\rho}\sum_{z<d\le y/l}\frac{\chi(d)}{d^\rho}\right|.
\end{gather*}
The parameter $z$ will be chosen later to make these two terms approximately the same size.
Summation by parts \cite[Theorem 4.2]{Apostol} gives
\[
\phi(s)= \sum_{l=1}^{y/d} \frac{(-1)^l}{l^s}-\frac{S(y/d)}{(y/d)^s}+s\int_{y/d}^\infty\frac{S(x)-S(y/d)}{x^{s+1}}dx,
\]
where $S(x)=\sum_{n\le x} (-1)^n$ is $-1$ or $0$.  Set $s=\rho$ and use $\phi(\rho)=0$; we bound the integral getting
\begin{gather*}
\left|s\int_{y/d}^\infty\frac{S(x)-S(y/d)}{x^{s+1}}dx\right|\le\frac{|\rho|}{\beta (y/d)^\beta}\\
\left|\frac{S(y/d)}{(y/d)^s}\right|\le\frac{1}{(y/d)^\beta}.
\end{gather*}
So we claim
\[
\left| \sum_{l=1}^{y/d} \frac{(-1)^l}{l^\rho}\right|\le\frac{|\rho|}{\beta (y/d)^\beta},
\]
since $1<1/\beta$ and \cite{Gourdan} shows that $10^{12}<|\rho|$.

Thus we can estimate the first term in the previous sum
\[
\left|\sum_{d\le z}\frac{\chi(d)}{d^\rho}\sum_{l\le y/d}\frac{(-1)^l}{l^\rho}\right|\le
\sum_{d\le z}\frac{1}{d^\beta}\cdot \frac{|\rho|}{\beta (y/d)^\beta}\\
= \frac{z|\rho|}{y^\beta\cdot \beta}.
\]
Another summation by parts gives
\[
\sum_{z<d\le y/l}\frac{\chi(d)}{d^s}=\frac{S_D(y/l)}{(y/l)^s}-\frac{S_D(z)}{z^s}+s\int_{z}^{y/l}\frac{S_{D}(x)-S_{D}(\sqrt{y})}{x^{s+1}}dx,
\]
where $S_D(x)=\sum_{n\le x}\chi(n)$.  By the Polya-Vinogradov inequality \cite[Theorem 8.21]{Apostol}, $\left|S_D(x)\right|<\sqrt{D}\log D$.
Neglecting the boundary terms as before, we bound the integral as
\[
\left|\sum_{z<d\le y/l}\frac{\chi(d)}{d^\rho}\right|\le \frac{|\rho|\sqrt {D}\log D}{\beta z^{\beta}},
\]
and so bound the second sum above as
\begin{multline*}
\left|\sum_{l\le y/z}\frac{(-1)^l}{l^\rho}\sum_{z<d\le y/l}\frac{\chi(d)}{d^\rho}\right|\\
\le
\sum_{l\le y/z}\frac{|\rho|\sqrt {D}\log D}{\beta l^\beta z^{\beta}}
=\frac{|\rho|\sqrt {D}\log D}{\beta}\sum_{l\le y/z}\frac{1}{ l^\beta z^{\beta}}.
\end{multline*}
Now
\begin{multline*}
\sum_{l\le y/z}\frac{1}{ l^\beta z^{\beta}}=\frac{y^{1-\beta}}{z}\sum_{l\le y/z}\frac{1}{ l^\beta (y/z)^{1-\beta}}\\
<\frac{y^{1-\beta}}{z}\sum_{l\le y/z}\frac{1}{l^\beta\cdot l^{1-\beta}}\sim\frac{y^{1-\beta}\log(y/z)}{z},
\end{multline*}
where the inequality follows since $l<y/z$.  This gives, for the second sum, the bound
\[
\frac{|\rho|\sqrt {D}\log D}{\beta}\frac{y^{1-\beta}\log(y/z)}{z}
\]

Comparing the two estimates, we see they are approximately the same size when 
\[
\frac{z}{y^\beta}=\frac{\sqrt{D}y^{1-\beta}}{z},\quad\text{or}\quad z=D^{1/4}y^{1/2}.
\]

Combining the two sum estimates, and with 
\[
 \frac{1}{\beta}<\frac{6}{5},\quad\text{and}\quad 1<\frac{\log(y/\sqrt{D})\log D}{18},
\]
we have
\begin{multline*}
\frac{y^{1/2-\beta}|\rho|D^{1/4}}{\beta}+\frac{y^{1/2-\beta}|\rho|\log(y/\sqrt{D})D^{1/4}\log D}{2\beta}\\
<\frac{6}{5}\cdot\left(\frac{1}{18}+\frac12\right)y^{1/2-\beta}\log (y/\sqrt{D})|\rho|D^{1/4}\log D\\
= \frac{2}{3}\cdot y^{1/2-\beta}\log (y/\sqrt{D})|\rho|D^{1/4}\log D.
\end{multline*}
This concludes the Lemma.
\end{proof}
\subsection*{Lower Bounds, Again}
Applying Lemma \ref{L:Lemma2} with $y=U^{12}/n$, so $U^6<y<U^{12}$, we get
\begin{align*}
\Sigma_1<&8 U^{6-12\beta}\log U|\rho|D^{1/4}\log D \sum_{n\le U^{6}}\frac{\tau(n)}{\sqrt{n}}\\
=&8 U^{7-12\beta}\log U \sum_{n\le U^{6}}\frac{\tau(n)}{\sqrt{n}}.
\end{align*}
With an estimate  by the standard \lq method of the hyperbola\rq\, e.g.\ \cite[(2.9) p.37]{MV}, we get that
\[
 \sum_{n\le X}\frac{\tau(n)}{\sqrt{n}}=X^{1/2}\left(2\log X+4C-4\right)+O(1).
\]
Thus
\[
\Sigma_1< 96U^{10-12\beta}\log^2 U,
\]
and so, from $\beta>5/6$, is small.  In fact, from 
\[
\frac{1}{25}\frac{\zeta(4\beta)}{\zeta(2\beta)}-U^{6-12\beta}\le\Sigma_1+\Sigma_2,
\] 
\emph{Mathematica} tells us
$1/50<\Sigma_2$
when $\beta>7/8$ and $U>10^{16}$.  (We are assuming $D>10^9$, and Gourdan \cite{Gourdan} has verified the Riemann Hypothesis for the first $10^{13}$ zeros.  So our hypothetical $|\rho|>2.4\times 10^{12}$, so necessarily $U=|\rho|D^{1/4}\log D>10^{16}$.)
%(Some kind of lower bound on $U$ is necessary for Pintz, but not made explicit.)

We now convert the lower bound for $\Sigma_2$ to a lower bound for $L(1,\chi)$.  Recall that
\[
\Sigma_2=\sum_{U^{6}< n\le U^{12}}\frac{1*\chi(n)}{n^\beta}\sum_{r\le U^{12}/n}\frac{\tau(r)}{r^\beta}.
\]
Writing $r^{-\beta}=r^{1-\beta}/r$ and using $r^{1-\beta}<U^{12(1-\beta)}n^{\beta-1}$ we see that
\[
\frac{1}{50}<\Sigma_2<U^{12(1-\beta)}\sum_{U^{6}< n\le U^{12}}\frac{1*\chi(n)}{n}\sum_{r\le U^{12}/n}\frac{\tau(r)}{r}.
\]
The \lq method of the hyperbola\rq\ argument shows in \cite[Ex. 11.2.1 (g)]{MV}\footnote{The implied constant in that exercise, combining six big Oh terms with implied constant equal $1$, can be taken to be $6$.} that
\begin{align*}
\sum_{U^{6}\le n\le U^{12}}\frac{1*\chi(n)}{n}=&\log(U^{6}) L(1,\chi)+O\left(D^{1/4} U^{-3}\log D\log(U^{6})\right)\\
=&\log(U^{6})L(1,\chi)+O\left(U^{-2}\log(U^{6})\right)\\
=&\log(U^{6})\left(L(1,\chi)+O\left(U^{-2}\right)\right).
\end{align*}
Meanwhile one more application of this same tool (along with Euler summation) gives that
\[
\sum_{r<X}\frac{\tau(r)}{r}=\frac{1}{2}\log^2 X+2C\log X+O(1).
\]
So
\[
\sum_{r\le U^{12}/n}\frac{\tau(r)}{r}\sim\frac12\log^2(U^{12}/n)<\frac12\log^2(U^{6}),
\]
as $U^{6}<n$.
Finally
\begin{align*}
\frac{1}{50}<\Sigma_2<&U^{12(1-\beta)}\log(U^{6})\left(L(1,\chi)+O\left(U^{-2}\right)\right)\cdot\frac12\log^2(U^{6})\\
=&108U^{12(1-\beta)}\log^3 U\left(L(1,\chi)+O\left(U^{-2}\right)\right).
\end{align*}

The implied constant is no worse than $6$, and  
\[
U^{-2}=\frac{1}{|\rho|^2\sqrt{D}\log^2D}<\frac{1}{\sqrt{D}},
\]
so the theorem follows.
\subsection*{The General Case}

We fix a prime $q|D$ and  consider
\[
\sum_{n=1}^\infty\frac{c_q(n)}{n^s}=\left(q^{1-s}-1\right)\zeta(s),
\]
where $c_q(n)$ is the Ramanujan sum
\[
c_q(n)=\sum_{k=1}^{q-1}\exp(2\pi ikn/q)=
\begin{cases}
-1&\text{ if } (n,q)=1\\
q-1&\text{ if } q|n.
\end{cases}
\]
(Observe that $c_2(n)=(-1)^n$.)\ \ Since $|\sum_{n<x} c_q(n) |<q$, the Dirichlet series converges conditionally for $\tre(s)>0$.  The Ramanujan sums are not multiplicative in $n$, but we have that $c_q(dm)=c_q(m)$ if $(d,q)=1$.  Instead of $\lodd$ we define a function $\lambda_q(n)=0$ if $q|n$.  The proof goes through as before.  We find that in Lemma \ref{L:Lemma1} we have that
\[
\sum_{n=1}^\infty\frac{c_q(n)\cdot 1*\lambda_q(n)}{n^\rho}=\left(1+q^{-\rho}\right)\left(1-q^{1-\rho}\right)\zeta(2\rho).
\]
so the trivial zeros along $\tre(s)=1$ when $\gamma=2\pi n/\log q$ still cause a problem.  In fact, the constant $1/25$ in Lemma \ref{L:Lemma1} which works for $q=2$ is a decreasing function of $q$ in the general case.


\begin{thebibliography}{99}
\bibitem{Apostol} T. Apostol, Introduction to Analytic Number Theory.  Springer-Verlag, (1976).
%\bibitem{CI} B. Conrey and H. Iwaniec, \emph{Spacing of zeros of Hecke $L$-functions and the class number problem}, Acta Arith. \textbf{103} (2002), no. 3, pp. 259-312.
%\bibitem{Deuring} M. Deuring, \emph{Zetafunktionen quadratischer Formen}, J. Reine Angew. Math., \textbf{172} (1935), pp. 226-252.
\bibitem{Deuring} M. Deuring, \emph{Imagin\"are quadratische Zahlk\"orper mit der Klassenzahl 1}, Math.\ Zeit.\ \textbf{37} (1933), pp. 405-415.
%\bibitem{Estermann} T. Estermann, Complex Numners and Functions, Athlone Press, 1962.
%\bibitem{Ford1} K. Ford, \href{http://dx.doi.org/10.1112/S0024611502013655}{\emph{Vinogradov's integral and bounds for the Riemann zeta function}}, Proc. London Math. Soc. (3) 85 (2002), no. 3, pp. 565-633.
%\bibitem{Ford2} \bysame, \href{http://www.math.uiuc.edu/~ford/wwwpapers/zeros.pdf}{\emph{Zero free regions for the Riemann zeta function}}, in Number Theory for the Millennium II, pp. 25-56, AK Peters, 2002.
\bibitem{Gourdan} X.\ Gourdan, \href{http://numbers.computation.free.fr/Constants/Miscellaneous/zetazeros1e13-1e24.pdf}{\emph{The $10^{13}$ first zeros of the Riemann Zeta function,
and zeros computation at very large height}}.
%\bibitem{HB} R. Heath-Brown, \emph{Small Class Number and the Pair Correlation of Zeros}, talk at the conference \lq In Celebration of the Centenary of the Proof of the Prime Number Theorem:A Symposium on the Riemann Hypothesis\rq\ Seattle, 1996.  Videotape: American Institute of Mathematics.
\bibitem{Heilbronn} H. Heilbronn, \emph{On the class number in imaginary quadratic fields}, Quart.\ J.\ Math.\ Oxf.\ Ser.\ \textbf{5} (1935), pp.150-160.
\bibitem{Motohashi} Y. Motohashi, \emph{On the Deuring-Heilbronn phenomenon.  I, II}, Proc.\ Japan Acad.\ Ser.\  A.\ Math.\ Sci.\ \textbf{53} (1977), pp. 1-2, 25-27.
\bibitem{Motobook} \bysame, Lectures on sieve methods and prime number theory. Tata Institute of Fundamental Research Lectures on Mathematics and Physics, \textbf{72}. Published for the Tata Institute of Fundamental Research, Bombay; by Springer-Verlag, Berlin, 1983.
%\bibitem{I} H. Iwaniec, \emph{Conversations on the exceptional character}, Springer LNM v. 1891
%\bibitem{IK} H. Iwaniec and E. Kowalski, Analytic Number Theory. AMS Colloquium Publications \textbf{53}, (2004).
%\bibitem{Landau} E. Landau, \emph{\"Uber die klassenzahl imagin\"ar quadratischer Zahlk\"orper}, G\"ottinger Nachrichten 1918, pp.\ 285-296.
%\bibitem{Mallik} A. Mallik, \emph{New proofs of asymptotic formulae for Siegel's zero}, Ann. Univ. Sci. Budapest. E\"otv\"os Sect. Math. \textbf{24} (1981), pp. 21-28.
%\bibitem{Montgomery} H.L. Montgomery, \emph{The pair correlation of zeros of the zeta function}, in Proceedings  Symp. Pure Math. \textbf{24} (1972), pp. 190-202.
%\bibitem{M2} \bysame, \emph{Corr\'elations dans l'ensemble des z\'eros de la fonction z\^eta}, S\'{e}minaire de Th\'{e}orie des Nombres, 1971-72 (Univ. Bordeaux I, Talence), Exp. No. 19
\bibitem{MV} H.L. Montgomery and R.C. Vaughan, Multiplicative Number Theory I. Classical Theory, Cambridge Studies in Advanced Mathematics 97, (2007).
%\bibitem{Oes} J. Oesterl\'e, \emph{Le probl\`eme de Gauss sur le nombre de classes}, Enseign.\ Math.\ \textbf{34} (1988), pp.\ 43-67.
%\bibitem{Page} A. Page, \emph{On the number of primes in an arithmetic progression}, Proc.\ London Math.\ Soc., \textbf{39} (1935), pp.\ 116-141.
\bibitem{Pintz} J. Pintz, \emph{On Siegel's theorem}, Acta Arith.\ \textbf{24} (1973/74), pp. 543-551.

\bibitem{PintzI} \bysame, \emph{Elementary methods in the theory of L-functions. I. Hecke's theorem}, Acta Arith.\ \textbf{31} (1976), pp. 53-60.

\bibitem{PintzII} \bysame, \emph{Elementary methods in the theory of L-functions. II. On the greatest real zero of a real
L-function}, Acta Arith., \textbf{31} (1976), pp. 273-289.
\bibitem{PintzIII} \bysame, \emph{Elementary methods in the theory of L-functions. III. The Deuring phenomenon}, Acta Arith.\ \textbf{31} (1976), pp. 295-306.
\bibitem{PintzIV} \bysame, \emph{Elementary methods in the theory of L-functions. IV. The Heilbronn phenomenon}, Acta Arith.\ \textbf{31} (1976), pp. 419-429.

\bibitem{PintzV} \bysame, \emph{Elementary methods in the theory of L-functions. V. The theorems of Landau and Page}, Acta Arith.\ \textbf{32} (1977), pp. 163-171.

%\bibitem{Rademacher} H. Rademacher, \href{http://springerlink.metapress.com/content/pw44j3252v45k34p/}{\emph{On the Phragm\'en-Lindel\"of theorem and some applications}}, Mathematische Zeitschrift, \textbf{72} (1959/60), pp. 192-204.
%\newline\href{http://springerlink.metapress.com/content/pw44j3252v45k34p/}{\texttt{http://springerlink.metapress.com/content/pw44j3252v45k34p/}}
%\bibitem{Stark}  H. Stark, \emph{On the tenth complex quadratic field with class number one}, Ph.D. thesis, University of California, Berkeley, 1964.
%\bibitem{Stark1} \bysame, \emph{On the zeros of Epstein's zeta function}, Mathematika, \textbf{14} (1967), pp. 47-55.
\bibitem{Stopple0} J. Stopple,  \href{http://www.ams.org/notices/200608/fea-stopple.pdf}{\emph{Notes on the Deuring-Heilbronn phenomenon}}, Notices Amer.\ Math.\ Soc.\ \textbf{53} (2006), no.\ 8, pp.\ 864-875. 
%\newline\href{http://www.ams.org/notices/200608/fea-stopple.pdf}{\texttt{http://www.ams.org/notices/200608/fea-stopple.pdf}}
%\bibitem{Stopple} \bysame, A Primer of Analytic Number Theory: from Pythagoras to Riemann, Cambridge University Press, 2003.
%\bibitem{Tit1} E. Titchmarsh, Introduction to the Theory of Fourier Integrals, Oxford University Press.
%\bibitem{Tit2}\bysame, The Theory of the Riemann Zeta Function, Oxford University Press, 2nd ed., 1986.
%\bibitem{Tricomi} F. Tricomi, \href{http://resolver.sub.uni-goettingen.de/purl?GDZPPN002381850}{\emph{Asymptotische Eigenschaften der unvollst\"andigen Gammafunktion}}, Math. Zeit.,  \textbf{53} (1950), pp. 136-148.
%\bibitem{Watkins} M. Watkins, \emph{Class numbers of imaginary quadratic fields}, Ph.D. thesis, University of Georgia, 2000.
\end{thebibliography}
\end{document}